\newtheorem{theorem}{Theorem}[section]
\newtheorem{proposition}[theorem]{Proposition}
\newtheorem{lemma}[theorem]{Lemma}
\newtheorem{corollary}[theorem]{Corollary}
\theoremstyle{definition}
\newtheorem{example}[theorem]{Example}
\newtheorem{problem}[theorem]{Problem}
\def\sgcb{semi-generalized co-Bassian}
\def\val#1{\vert #1 \vert}
\begin{document}

\author[A.R. Chekhlov]{Andrey R. Chekhlov}
\address{Department of Mathematics and Mechanics, Regional Scientific and Educational Mathematical Center, Tomsk State University, 634050 Tomsk, Russia}
\email{cheklov@math.tsu.ru; a.r.che@yandex.ru}
\author[P.V. Danchev]{Peter V. Danchev}
\address{Institute of Mathematics and Informatics, Bulgarian Academy of Sciences, 1113 Sofia, Bulgaria}
\email{danchev@math.bas.bg; pvdanchev@yahoo.com}
\author[P.W. Keef]{Patrick W. Keef}
\address{Department of Mathematics, Whitman College, Walla Walla, WA 99362, USA}
\email{keef@whitman.edu}

\title[Semi-generalized co-Bassian groups] {Semi-generalized co-Bassian groups}
\keywords{Bassian groups, (generalized) Bassian groups, (generalized) co-Bassian groups, semi-generalized co-Bassian groups}
\subjclass[2010]{20K10, 20K20, 20K30}

\maketitle

\begin{abstract} As a common non-trivial generalization of the notion of a generalized co-Bassian group, recently defined by the third author, we introduce the notion of a {\it semi-generalized co-Bassian} group and initiate its comprehensive study. Specifically, we give a complete characterization of these groups in the cases of $p$-torsion groups and groups of finite torsion-free rank by showing that these groups can be completely determined in terms of generalized finite $p$-ranks and also depends on their quotients modulo the maximal torsion subgroup. Surprisingly, for $p$-primary groups, the concept of a semi-generalized co-Bassian group is closely related to that of a generalized co-Bassian group.
\end{abstract}

\vskip2.0pc

\section{Introduction and Motivation}
	
Throughout the rest of the paper, unless specified something else, all groups will be additively written Abelian groups. We will primarily use the notation and terminology of \cite{F0,F1,F2}, but we will follow somewhat those from \cite{K} and \cite{Gr} as well. We just recall that an arbitrary subgroup $H$ of a group $G$ is {\it essential} in $G$ if, for any non-zero subgroup $S$ of $G$, the intersection between $H$ and $S$ is also non-zero. It is an elementary exercise to see that every subgroup will be essential as a subgroup of itself and thus, in particular, the zero subgroup $\{0\}$ is essential in itself too.

We begin with a brief review of some of the most important concepts which motivate our writing of the present article.

Mimicking \cite{CDG1}, a group $G$ is said to be {\it Bassian} if the existence of an injective homomorphism $\phi: G \to G/N$ for some subgroup $N$ of $G$ implies that $N = \{0\}$. More generally, imitating \cite{CDG2}, if this injection $\phi$ implies that $N$ is a direct summand of $G$, then $G$ is said to be {\it generalized Bassian}.

Note that Bassian groups were completely characterized in \cite{CDG1}. A crucial example of a generalized Bassian group which is {\it not} Bassian is the infinite {\it elementary $p$-group} that is an arbitrary infinite direct sum of the cyclic group $\mathbb{Z}_p$ for some fixed prime $p$. Unfortunately, the class of generalized Bassian groups is {\it not} fully characterized due to the unsettled at this stage problem of whether or {\it not} a subgroup of a generalized Bassian group remains so. On this vein, for some other interesting properties of subgroups of (generalized) Bassian groups, we refer the interested readers to \cite{DG}.

Taking into account all of the information we have so far, and in order to refine the group property of being generalized Bassian, we introduced in \cite{CDK} a group $G$ to be {\it semi-generalized Bassian} if, for any its subgroup $N$, the injective homomorphism $\phi: G \to G/N$ implies that $N$ is essential in a direct summand of $G$. Clearly, any generalized Bassian group is semi-generalized Bassian. In difference to the case of generalized Bassian groups, it is quite curious that semi-generalized Bassian groups were totally classified in the cases where $G$ is {\it not} truly mixed. In fact, we succeeded to classify torsion, torsion-free and splitting mixed semi-generalized Bassian groups.

Reciprocally, in \cite{Ke}, was defined the so-called {\it (generalized) co-Bassian} groups in the following manner: A group $G$ is termed co-Bassian if, for all subgroups $H\leq G$, whenever $\varphi: G\to G/H$ is an injective homomorphism, then $\varphi(G)=G/H$. In general, if $\varphi(G)$ is a direct summand of $G/H$, the group $G$ is termed generalized co-Bassian. Fortunately, these two classes of groups were completely described.

Furthermore, expanding generalized co-Bassian groups in the way of semi-generalized Bassian groups, we shall say that a group $G$ is {\it semi-generalized co-Bassian} if, for all $H\leq G$, the injection $\varphi: G\to G/H$ forces that $\varphi(G)$ is essential in a direct summand of $G/H$.

\medskip

Our motivating tool is to explore the structure of the so-defined semi-generalized co-Bassian groups and to show that their full description is rather more complicated than the case of generalized-Bassian groups. The essence of our claim is the situation of having such groups with infinite torsion-free rank and, especially, the lack of workable idea and approach how to prove or even to disprove that if $G$ is a group whose torsion subgroup $T$ is a direct sum of a divisible group and an elementary group such that the quotient $G/T$ is divisible, then $G$ is semi-generalized co-Bassian group or {\it not}.

\medskip

Our next work is structured thus: In the present first section, we gave a short retrospection of the basic notions. In the subsequent second section, we formulate our chief results and provide their complete proofs. Precisely, the paper's main goal is to characterize in detail semi-generalized co-Bassian $p$-groups as well as semi-generalized co-Bassian groups having finite torsion-free rank. Concretely, the most important of them state like this: {\it A $p$-group $G$ is semi-generalized co-Bassian if, and only if, its subgroup $pG$ has generalized finite $p$-rank} (see Theorem~\ref{major}); {\it Suppose $G$ is a group of finite torsion-free rank. Then, $G$ is semi-generalized co-Bassian if, and only if, for each prime $p$, the $p$-component of torsion $T_p$ is semi-generalized co-Bassian group such that either (a) $T_p$ possesses finite $p$-rank, or (b) $T_p$ is divisible, or (c) the quotient-group $G/T$ is $p$-divisible} (see Theorem~\ref{finiterank}). We also examine the case of groups with infinite torsion-free rank proving that {\it If a group $G$ is semi-generalized co-Bassian of infinite torsion-free rank, then the factor-group $G/T$ is divisible and the torsion part $T$ is a direct sum of a divisible group and an   elementary group} (see Proposition~\ref{4}). Finally, we end our work with certain comments and also state several open problems which quite logically arise and, hopefully, will stimulate a further exploration on the subject.

\section{Main Results and Their Proofs}

We first begin with the following elementary but useful observation, which was {\it not} stated and proved in \cite{Ke}, that shows to what extent the classes of co-Bassian and generalized co-Bassian groups differ each other.

\begin{proposition} If $G$ is a Hopfian group, then $G$ is generalized co-Bassian if, and only if, $G$ is co-Bassian.
\end{proposition}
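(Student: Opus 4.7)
The easy direction is immediate: if $G$ is co-Bassian and $\varphi: G \to G/H$ is injective, then $\varphi(G) = G/H$, and of course $G/H$ is a (trivial) direct summand of itself, so $G$ is generalized co-Bassian. No hypothesis on $G$ is needed for this implication.

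For the substantive direction, suppose $G$ is Hopfian and generalized co-Bassian. The plan is to take an arbitrary injection $\varphi: G \to G/H$, use the generalized co-Bassian hypothesis to produce a decomposition $G/H = \varphi(G) \oplus K$, and then show that the complementary summand $K$ must vanish. To exploit Hopficity I would write $K = L/H$ for some subgroup $L$ with $H \leq L \leq G$. Then on the one hand $(G/H)/K \cong \varphi(G) \cong G$ (using that $\varphi$ is injective), and on the other hand $(G/H)/K = (G/H)/(L/H) \cong G/L$. Combining these gives $G/L \cong G$.

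Now I would form the composition of the canonical surjection $G \to G/L$ with the isomorphism $G/L \to G$ obtained above. This is a surjective endomorphism of $G$, so by the Hopfian hypothesis it is an isomorphism, forcing its kernel $L$ to be $0$. Since $H \leq L$, we conclude $H = 0$, and therefore $K = L/H = 0$ as well. Thus $G/H = \varphi(G)$, which is precisely the co-Bassian conclusion.

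The argument is essentially a one-line application of the Hopfian property to the surjection $G \to G/L$, so there is no genuine obstacle; the only mildly subtle point is the bookkeeping that turns the abstract complement $K \leq G/H$ into a subgroup $L$ of $G$ containing $H$, which is what makes the quotient identification $(G/H)/(L/H) \cong G/L$ available for feeding into the Hopfian hypothesis.
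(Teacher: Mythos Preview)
Your proof is correct and follows essentially the same route as the paper: both write the complement of $\varphi(G)$ in $G/H$ as a quotient $L/H$, use the third isomorphism theorem to obtain $G/L\cong \varphi(G)\cong G$, and then invoke Hopficity on the resulting surjective endomorphism to force the complement to vanish. The only cosmetic difference is that the paper argues by contradiction (assuming the complement is nonzero) while you argue directly, and as a byproduct you also obtain $H=0$.
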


\begin{proof} One direction being trivial, we deal with the opposite one. In fact, assume in a way of a contradiction that $G$ is generalized co-Bassian but, however, not co-Bassian. Then, there exists a subgroup $N$ of $G$ and an embedding $f: G\to G/N$. Since $G$ is generalized co-Bassian, one has that $f(G)$ is a direct summand of $G/N$, say
$G/N = f(G)\oplus H/N$ for some $H/N\neq\{\overline{0}\}$. Now, there is an isomorphism $$G/H\cong (G/N)/(H/N)\cong f(G),$$ say $\varphi:G/H\to f(G)$ and, moreover, if $\pi: G\to G/H$ is the canonical epimorphism, then the composition
$\varphi^{-1}\pi$ is obviously an epimorphism but a non-isomorphism of $G$ as $\ker (\varphi^{-1}\pi)=H\neq\{0\}$, contrary to the condition that $G$ is Hopfian. So, $G$ is really co-Bassian, as required.
\end{proof}

Our next incidental assertion curiously states as follows.

\begin{proposition} The free group $G$ is semi-generalized co-Bassian if, and only if, $G$ is Bassian.
\end{proposition}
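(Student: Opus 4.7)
The plan is to treat the two directions separately, using the fact that among free abelian groups the Bassian ones are precisely those of finite rank (finite rank is handled by rank arithmetic, while for infinite rank $\kappa$ one exhibits the isomorphism $\mathbb{Z}^{(\kappa)} \cong \mathbb{Z}^{(\kappa)}/\mathbb{Z} e_0$, which witnesses the failure of Bassianness). For the implication ``Bassian $\Rightarrow$ semi-generalized co-Bassian'', suppose $G$ is free of finite rank $n$ and $\varphi\colon G \to G/H$ is injective. Bassianness forces $H = \{0\}$, so $G/H = G$; then $\varphi(G)$ is a free rank-$n$ subgroup of $G$, so $G/\varphi(G)$ is finite, and hence $\varphi(G)$ is essential in $G$, which is trivially a direct summand of itself.

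For the converse I would argue the contrapositive: let $G$ be free of infinite rank $\kappa$ and produce $H$ and $\varphi$ witnessing the failure of the semi-generalized co-Bassian property. The key structural observation is that, for any torsion-free $A$ and subgroup $P \le A$, the subgroup $P$ is essential in a direct summand $K$ of $A$ if and only if the purification $P_\ast$ of $P$ in $A$ is itself a direct summand of $A$, in which case $K = P_\ast$. Indeed, $P$ essential in the torsion-free $K$ forces $K/P$ to be torsion, whence $K \le P_\ast$; while $K$ being a summand of the torsion-free $A$ makes $K$ pure, and hence $K \supseteq P_\ast$.

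Armed with this reduction, I would pick any epimorphism $\pi \colon G \to \mathbb{Q}$ (possible because $\mathbb{Q}$ is a quotient of a countable free summand of $G$) and set $N := \Ker \pi$ and $H := \{0\}$. Then $N$ is pure in $G$ (since $G/N \cong \mathbb{Q}$ is torsion-free), free abelian of rank $\dim_{\mathbb{Q}}(N \otimes \mathbb{Q}) = \kappa$, and therefore isomorphic to $G$; fix such an isomorphism and regard it as an injection $\varphi \colon G \to G = G/H$ with $\varphi(G) = N$. If $N$ were a direct summand of $G$, any complement would be isomorphic to $G/N \cong \mathbb{Q}$, contradicting the fact that direct summands of free abelian groups are free. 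Hence $\varphi(G)_\ast = N$ is not a direct summand, and by the observation $\varphi(G)$ is essential in no direct summand of $G/H$, so $G$ fails to be semi-generalized co-Bassian. The only real obstacle here is the structural observation above; once one realizes that the only candidate summand $K$ is the purification $\varphi(G)_\ast$ itself, the construction via a surjection onto $\mathbb{Q}$ supplies a pure non-summand copy of $G$ inside $G$ at once.
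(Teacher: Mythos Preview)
Your proof is correct and in fact tighter than the paper's. Both arguments ultimately rely on the torsion-free observation you isolate explicitly: a subgroup $P$ of a torsion-free group $A$ is essential in a direct summand if and only if its purification $P_\ast$ is itself a summand (and then the summand must equal $P_\ast$). The paper applies this by passing to an \emph{indecomposable} torsion-free quotient $G/N$ of rank $\kappa=\mathrm{rank}(G)$ and then asserting the existence of an injection $f\colon G\to G/N$ whose image is not essential; but any injective image of a rank-$\kappa$ group in a torsion-free group of rank $\kappa$ is automatically essential, so that step is at best opaque. Your route avoids this entirely: you keep $H=\{0\}$, build a pure rank-$\kappa$ subgroup $N=\Ker(G\twoheadrightarrow\mathbb Q)$ of $G$ itself, and observe that $N$ cannot be a summand because a complement would be isomorphic to $\mathbb Q$, contradicting freeness. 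Thus $\varphi(G)_\ast=N$ is a pure non-summand and the structural observation finishes the job. This is more elementary and needs no exotic indecomposable test object.

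You also supply the converse direction (Bassian $\Rightarrow$ semi-generalized co-Bassian), which the paper leaves implicit: Bassianness forces $H=\{0\}$, and then $\varphi(G)\le G$ has finite index, hence is essential in the summand $G$ of $G/H=G$. That short paragraph is a genuine addition to the argument.
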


\begin{proof} If we assume for a moment that $G$ has infinite rank, then there exists a subgroup $N\leq G$ such that the quotient-group $G/N$ is a torsion-free indecomposable group with ${\rm rank}(G)={\rm rank}(G/N)$. Thus, there is an injection $f:G\to G/N$ such that $f(G)$ is {\it not} essential in $G/N$, whence for the purification of $f(G)$ we have $\langle f(G)\rangle_*\neq G/N$ and, consequently, in view of indecomposability of $G/N$ the group $G$ is {\it not} semi-generalized co-Bassian, as expected. Therefore, $G$ has a finite rank and hence applying \cite{CDG1} the group $G$ must be Bassian, as claimed.
\end{proof}

We continue our work with two critical constructions as follows:

\begin{example}\label{(1)} (i) Every divisible group is semi-generalized co-Bassian.

\medskip

(ii) The group $G=\bigoplus_{\alpha}\mathbb{Z}_{p^n}$ is semi-generalized co-Bassian for all ordinals $\alpha$ and integers $n\geq 1$.
\end{example}

\begin{proof} (i) This follows automatically, because the factor-group of a divisible group is also divisible and,
for each divisible group $D$, any its subgroup is essential in some direct summand of $D$ (see \cite[Theorem~24.4]{F1}), as needed.

(ii) Since, for every injection $f: G\to G/H$, the image $f(G)$ is a direct sum of cyclic groups of order exactly $p^n$ and $G/H$ is a direct sum of cyclic groups of order $\leq p^n$, we can infer that $f(G)$ is a direct summand in $G/H$ (see \cite[Proposition~27.1]{F1}), as required.
\end{proof}

The following technicality is helpful for our presentation below.

\begin{lemma}\label{summands} The class of semi-generalized co-Bassian groups is closed under taking direct summands.
\end{lemma}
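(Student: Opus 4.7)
The plan is to embed the problem for a summand into the ambient group and then project back. Suppose $G = A \oplus B$ with $G$ semi-generalized co-Bassian, and let $H \leq A$ together with an injection $\varphi : A \to A/H$. First I would lift $\varphi$ to an injection $\psi : G \to G/H$ via the natural identification $G/H \cong A/H \oplus B$; concretely $\psi(a+b) = \varphi(a) + b$, which is clearly one-to-one because $\varphi$ is. Applying the semi-generalized co-Bassian hypothesis to $\psi$ yields a direct summand $D$ of $A/H \oplus B$ in which $\psi(G) = \varphi(A) \oplus B$ is essential.

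Next I would extract a candidate summand of $A/H$ from $D$. Since $B \subseteq D \subseteq A/H \oplus B$, a routine modular-law argument gives $D = D' \oplus B$, where $D' := D \cap (A/H)$. Writing $A/H \oplus B = D \oplus E = D' \oplus B \oplus E$ and projecting along $B$ onto $A/H$, the projection restricted to $D' \oplus E$ shows $A/H = D' \oplus \pi(E)$, so $D'$ is a direct summand of $A/H$.

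It remains to verify that $\varphi(A)$ is essential in $D'$. Given any non-zero subgroup $S \leq D'$, essentiality of $\varphi(A) \oplus B$ in $D$ provides a non-zero element $s \in S \cap (\varphi(A) \oplus B)$; but $s \in D' \subseteq A/H$ forces the $B$-component to vanish by the directness of the sum $A/H \oplus B$, so $s \in \varphi(A) \cap S$. This completes the verification.

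No step looks genuinely hard; the subtlest point is simply the bookkeeping showing that $D' = D \cap (A/H)$ inherits both direct-summand status in $A/H$ and essentiality of $\varphi(A)$. I would expect the main (minor) pitfall is to conflate $D$ with $D'$, or to forget that one needs $B \subseteq D$ (which follows from $\psi(G) \subseteq D$ and is crucial for the modular decomposition $D = D' \oplus B$).
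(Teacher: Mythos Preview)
Your proof is correct and follows essentially the same approach as the paper: extend the injection by the identity on the complement, apply the hypothesis to obtain a summand $D$ containing $\psi(G)=\varphi(A)\oplus B$ essentially, then intersect with $A/H$ to recover the desired summand of $A/H$. The paper is slightly terser at the end (it notes that $D'=D\cap(A/H)$ is a summand of $\overline G$ and hence of $A/H$, rather than explicitly constructing the complement via the projection as you do), but the structure of the argument is the same.
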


\begin{proof} Suppose $G$ is a semi-generalized co-Bassian group and $H$ is a summand of $G$; say $G=H\oplus G'$ for some $G'\leq G$. Assume now $N$ is a subgroup of $H$ and $f:H\to H/N:= \overline H$ is an injective homomorphism. Extending $f$ to
$f': G\to G/N:= \overline G\cong \overline H\oplus G'$ by letting it equal to the identity on $G'$, it remains an injective homomorphism.

We, therefore, have $\overline G= K \oplus A$, with $A$ containing $f'(G)=f(H)\oplus G'$ as an essential subgroup.  It readily follows that $A\cap \overline H\subseteq \overline H$ contains $f(H)$ as an essential subgroup. And since $A=(A\cap \overline H)\oplus G'$, we can also conclude that $A\cap \overline H$ is a summand of $\overline G$, and hence of $\overline H$, as required.
\end{proof}

We now continue with a series of statements necessary for the successful establishment of our two main results presented in what follows.




Before doing that, we need to review some notation, used in \cite {Ke}:  As above, when $G$ is some group, $T$ will always be its torsion subgroup and $T_p$ the $p$-component of $T$. If we have occasion to refer to some other group $A$, we will denote its torsion subgroup by $T_A$. For the group $G$, suppose $\phi,$ and $\pi$ are, respectively, injective and surjective homomorphism $G\to \overline G$; when necessary, we may assume $\overline G=G/N$ for some $N\leq G$ and $\pi$ is the usual epimorphism. if $A$ is any subgroup of $G$, we will let $\hat A=\phi(A)$ and $\overline A=\pi(A)$.

\begin{lemma}\label{1}
If $p$ is a prime and $T_p$ is a reduced unbounded $p$-group, then $G$ is not a semi-generalized co-Bassian group. In particular, any reduced unbounded $p$-group $G$ is not a semi-generalized co-Bassian group.
\end{lemma}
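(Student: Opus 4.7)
The plan is to find a subgroup $N\le G$ and an injection $\varphi\colon G\to G/N$ whose image is essential in no direct summand of $G/N$.

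The starting point is basic-subgroup theory: since $T_p$ is reduced and unbounded, one can extract independent elements $c_1,c_2,\ldots\in T_p$ with $|c_n|=p^n$ so that $B:=\bigoplus_n\langle c_n\rangle$ is pure in $T_p$. I would carry out the key analysis first at the $B$-level. Consider the injective endomorphism $\varphi_0\colon B\to B$ defined by $\varphi_0(c_n)=c_n+pc_{n+1}$. Writing $e_n:=p^{n-1}c_n$ for the standard socle basis of $B$, a direct computation gives $\varphi_0(e_n)=e_n+e_{n+1}$, so $\varphi_0(B)[p]$ is exactly the hyperplane $V\subseteq B[p]$ cut out by the ``alternating-sum-of-coefficients equals zero'' linear functional, and $B/\varphi_0(B)\cong\mathbb{Z}(p^\infty)$. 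In particular $\varphi_0(B)$ is not a direct summand of the reduced group $B$.

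The heart of the argument is then showing that $\varphi_0(B)$ is essential in no direct summand of $B$. For any such summand $A$ one has $A[p]\supseteq V$. Since $V$ has codimension one in $B[p]$, only two cases remain: either $A[p]=B[p]$, which forces $A=B$ and then essentiality fails since $c_1\notin\varphi_0(B)$ (the required coefficients have no finite-support representation); or $A[p]=V$, which I would rule out by an Ulm-invariants and cyclic-decomposition analysis. Such a proper summand $A\subsetneq B$ would be $\Sigma$-cyclic complementing a single cyclic factor $\mathbb{Z}_{p^m}$, hence $A\cong\bigoplus_{n\ne m}\mathbb{Z}_{p^n}$; but then the socle of each cyclic summand $\langle x_n\rangle\subseteq A$ of order $p^n$ lies in $\operatorname{span}\{e_k\colon k\ge n\}$, which makes it combinatorially impossible to produce $A[p]=V$ (e.g.\ the basis vector $e_{m-1}+e_m$ of $V$ demands a socle contribution from an $e_{m-1}$-position that $A$'s decomposition cannot provide without violating the alternating-sum constraint).

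For the ``in particular'' case $G=B=T_p$, one simply takes $N=0$ and $\varphi=\varphi_0$. For a general $G$, since $B$ is pure in $G$ but typically not a direct summand, Lemma~\ref{summands} does not apply directly. Instead, I would construct $N\le G$ (built from the relations defining $\varphi_0$) together with an injection $\varphi\colon G\to G/N$ that restricts on $B$ to the $\varphi_0$-picture, extending from $B$ to $G$ via the injectivity (divisibility) of $\mathbb{Z}(p^\infty)=B/\varphi_0(B)$. The main obstacle is the final propagation: one must show that any direct summand of $G/N$ containing $\varphi(G)$ interacts controllably with the embedded Prüfer quotient $\overline B\subseteq G/N$, so that the socle/hyperplane obstruction visible at the $B$-level forces failure of essentiality at the $G$-level. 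This step relies on the indecomposability of $\mathbb{Z}(p^\infty)$ and the purity of $B$ in $G$ to pin down the divisible part of such a summand.
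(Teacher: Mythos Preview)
Your $B$-level analysis is essentially sound (the Ulm-invariant argument that no summand $A$ can have $A[p]=V$ does go through), but there is a real gap in the passage from $B$ to an arbitrary $G$, and you yourself flag it as ``the main obstacle'' without resolving it. Two specific problems: first, your ``in particular'' case is \emph{not} $G=B$; a reduced unbounded $p$-group need not equal the particular pure $\Sigma$-cyclic subgroup you extracted, so even that case requires the propagation step. Second, the sentence ``extending from $B$ to $G$ via the injectivity (divisibility) of $\mathbb Z(p^\infty)=B/\varphi_0(B)$'' does not produce a well-defined injective $\varphi\colon G\to G/N$: divisibility of a cokernel lets you extend maps \emph{into} a divisible target, not lift an endomorphism of a pure subgroup to the ambient group. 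You would still need to specify $N$, build $\varphi$, and---hardest of all---control how summands of $G/N$ meet the image, and nothing in the proposal does that. (A minor further issue: a reduced unbounded $T_p$ need not have a pure cyclic summand of order exactly $p^n$ for every $n$, so $|c_n|=p^n$ is too optimistic; only a strictly increasing sequence of exponents is guaranteed.)

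The paper sidesteps all of this by a completely different device: rather than staying inside $G$, it passes to an explicit quotient $\overline G=\langle b\rangle\oplus Z\oplus D\oplus(G'/T'_p)$ in which almost everything is divisible, using a divisible hull of $T'_p$. The injection sends $G'$ into the divisible part and twists one cyclic generator via $\phi(b)=pb+c$. The point is that $\hat G[p]$ then lies entirely in the divisible summand $Z\oplus D$, so any summand $S\supseteq\hat G$ with $\hat G$ essential must have divisible $p$-torsion; but $\phi(b)\in S$ has $p$-height $1$, a contradiction. This height/divisibility trick handles the general $G$ in one stroke, with no need to analyse summands combinatorially or to propagate anything from a pure subgroup.
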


\begin{proof} Suppose $T_p=\langle b\rangle\oplus T'_b$, where $b$ has order at least $p^2$, and $G= \langle b\rangle\oplus G'$, where $T'_p\subseteq G'$. If $D$ is a divisible hull for $T'_p$ and $Z:=\mathbb Z_{p^\infty}$, then there is a surjective homomorphism $T_p'\to Z\oplus D$ which extends to a surjective homomorphism $\tau:G'\to Z\oplus D$. This, in turn, extends to a surjective homomorphism
$$\pi:G=\langle b\rangle\oplus G'\to  \overline G:=\langle b\rangle\oplus Z\oplus D\oplus (G'/T'_p)$$
by setting it equal to the identity on $\langle b\rangle$ and, for $x\in G'$, letting $\pi(x) = \tau (x)+(x+T'_p)$.

Let $c\in Z$ have the same order as $b$.   The inclusion $T'_p\subseteq D$ extends to a homomorphism $\sigma:G'\to D$.
Define a homomorphism $\phi:G\to \overline G$ as follows: If $x\in G'$, let $\phi(x) = \sigma(x) + (x+T'_p)$, and $\phi(b) =pb+c$.  It is readily checked that $\phi$ is injective.

If $G$ were \sgcb, it would follow that $\hat G=\phi(G)$ is essential in a summand $S$ of $\overline G$. Since every element of $\hat G[p]=Z[p]\oplus D[p]$ has infinite height in $\overline G$, the $p$-torsion subgroup of $S$ is a divisible subgroup of $\overline G$. However, $\phi(b)=pb+c$ is such a $p$-torsion element of $S$ and its $p$-height satisfies $\val {pb+c}_{S}=\val {pb+c}_{\overline G}=1$. With this contradiction, we can deduce that $G$ is not \sgcb, as stated.

The second part is immediate.
\end{proof}



\begin{proposition}\label{leadin} Suppose $m>1$ is an integer and $\alpha$ is either $\infty$ or a positive integer with $m<\alpha$. If $G$ is a group with a summand of the form $\mathbb Z_{p^m}\oplus \mathbb Z_{p^\alpha}^{(\kappa)}$, where $\kappa$ is infinite, then $G$ is not semi-generalized Bassian.
\end{proposition}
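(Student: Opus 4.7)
The plan is to exhibit a subgroup $N \le G$ and an injective homomorphism $\phi\colon G \to G/N$ such that $N$ fails to be essential in any direct summand of $G$. By a summand-closure argument for the semi-generalized Bassian property (parallel in spirit to Lemma~\ref{summands} for its co-Bassian counterpart, but requiring its own verification), it suffices to work inside the stated summand itself, so I may take $G = H = \mathbb{Z}_{p^m} \oplus \mathbb{Z}_{p^\alpha}^{(\kappa)}$. Fix a generator $a$ of the $\mathbb{Z}_{p^m}$-factor and generators $\{b_i\}_{i < \kappa}$ of the $\mathbb{Z}_{p^\alpha}$-factors. Since $m < \alpha$, each element $c_i := p^{\alpha - m}b_i$ has order $p^m$ but $p$-height exactly $\alpha - m \ge 1$ in $G$, producing a height mismatch with $a$ (of height $0$) that will drive the entire construction.

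Exploiting that $\kappa$ is infinite, I would first try an explicit cascading candidate of the form $N = \langle a - c_0,\ c_0 - c_1,\ c_1 - c_2,\ \ldots\rangle$ indexed along a countable subset of $\kappa$, possibly augmented by further twisting relations among the $b_i$'s to break any remaining symmetry. The quotient $G/N$ identifies $a$ with each $c_j$ in a cascade, collapsing only countably many $\mathbb{Z}_{p^\alpha}$-summands; because $\kappa$ is infinite, the remaining unaffected summands still supply enough room for a shift embedding to produce the injection $\phi\colon G \to G/N$.

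The main obstacle is the non-essentiality step: showing no direct summand $S$ of $G$ simultaneously satisfies $N \le S$ and $S[p] = N[p]$ (the latter being the socle criterion for $N$ to be essential in $S$ for $p$-groups). The valued socle $N[p]$ contains $p^{m-1}(a - c_0) = p^{m-1}a - p^{\alpha-1}b_0$ at height $m - 1$ together with differences $p^{\alpha - 1}(b_i - b_j)$ at height $\alpha - 1$, which forces any candidate $S$ into a specific isomorphism type along the lines of $\mathbb{Z}_{p^m} \oplus \mathbb{Z}_{p^\alpha}^{(\omega)}$. The delicate point, which will be the main technical obstacle, is to argue that the threading of $a$ through the infinite chain of $c_i$'s in $N$ is genuinely incompatible with any such summand $S$, via a careful Ulm-invariant computation combining the height gap $\alpha - m \ge 1$ with the cascading relations defining $N$; in the naive two-generator prototype $N = \langle a - c_0,\ a - c_1\rangle$ the subgroup actually does sit essentially in the summand $\langle a - c_0\rangle \oplus \langle b_1 - b_0\rangle$, so the full cascading form of $N$ is essential to rule out every potential summand.
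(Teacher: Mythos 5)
There are two distinct problems here, either of which is fatal on its own. First, you are attacking the wrong property. Despite the word ``Bassian'' in the statement (a slip: the proposition is invoked in Theorem~\ref{major} and Proposition~\ref{3} to control semi-generalized \emph{co}-Bassian groups), what must be disproved is the co-Bassian condition: one must produce a surjection $\pi:G\to \overline G=G/N$ and an injection $\phi:G\to\overline G$ such that $\phi(G)$ is not essential in any direct summand \emph{of the quotient} $\overline G$. You instead set out to show that $N$ is not essential in any direct summand \emph{of $G$}, which is the negation of the semi-generalized Bassian property --- a different assertion that the paper neither proves nor uses.

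Second, even granting your reading, the construction fails at exactly the step you defer as ``the main technical obstacle.'' With $d_i:=b_{i+1}-b_i$, the set $\{b_0,d_0,d_1,\dots\}$ is again a decomposition basis, so $\bigoplus_{i<\omega}\langle b_i\rangle=\langle b_0\rangle\oplus\bigoplus_{i\ge 0}\langle d_i\rangle$, and your cascading subgroup satisfies
$$N=\langle a-c_0\rangle\oplus\bigoplus_{i\ge 0}\langle p^{\alpha-m}d_i\rangle\subseteq S:=\langle a-c_0\rangle\oplus\bigoplus_{i\ge 0}\langle d_i\rangle,$$
where $S$ is a direct summand of $G$ (apply the automorphism $a\mapsto a-c_0$ to the summand $\langle a\rangle$) and each $\langle p^{\alpha-m}d_i\rangle$ is essential in the cocyclic group $\langle d_i\rangle$. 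Hence $N$ \emph{is} essential in a summand of $G$: the change-of-basis phenomenon you correctly observed in the two-generator prototype persists verbatim for the infinite cascade, so no amount of further ``twisting'' along these lines will rescue the argument. The paper's actual proof is far simpler and of a different shape: take $N=\langle pb\rangle$ where $\langle b\rangle$ is the $\mathbb Z_{p^m}$-summand, let $\overline G=(\langle b\rangle/N)\oplus\mathbb Z_{p^\alpha}^{(\kappa)}$, and define $\phi$ by $\phi(b)=(b+N)+c$ with $c$ of order $p^m$ and positive height in one $\mathbb Z_{p^\alpha}$-copy, using a shift embedding $\mathbb Z_{p^\alpha}^{(\kappa)}\to\mathbb Z_{p^\alpha}^{(\kappa)}$ on the rest (this is where the infinitude of $\kappa$ enters). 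Then $\phi(b)$ has height $0$ while $p\phi(b)=pc\ne 0$ has height greater than $1$, so $\langle\phi(b)\rangle$ cannot be essential in the necessarily cyclic summand it would have to generate, and $\phi(G)$ is not essential in any summand of $\overline G$.
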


\begin{proof} Utilizing Lemma~\ref{summands}, there is no loss of generality in assuming that $G=\mathbb Z_{p^m}\oplus \mathbb Z_{p^\alpha}^{(\kappa)}$. Suppose the first summand is $\langle b\rangle$. There is clearly a decomposition $\mathbb Z_{p^\alpha}^{(\kappa)}=\mathbb Z_{p^\alpha}\oplus Z$ with an isomorphism $\sigma:\mathbb Z_{p^\alpha}^{(\kappa)}\to Z$. Let $c$ be an element of the first term of this decomposition of order $p^m$; in particular, $pc\ne 0$ and $\val {pc}_p>1$.

Let $N=\langle pb\rangle$ and
$$\pi:G\to  \overline G:= (\langle b\rangle/N)\oplus \mathbb Z_{p^\alpha}^{(\kappa)}=(\langle b\rangle/N)\oplus\mathbb Z_{p^\alpha}\oplus Z$$
be the obvious surjection.

Define now the map $\phi: G\to \overline G$ as follows: $\phi(b) = (b+N)+c$ and $\phi$ restricted to $\mathbb Z_{p^\alpha}$ agrees with $\sigma$. It is readily checked that $\phi$ is injective. If, however, $\hat G$ were an essential subgroup of a summand of $\overline G$, it would easily follow that $\langle \phi(b)\rangle$ would be an essential subgroup of a summand of $(\langle b\rangle/N) \oplus  \mathbb Z_{p^\alpha}$. Since this summand would have to be cyclic, this cannot be true, because $\val {\phi(b)}_p=0$, $p\phi(b)=pc\ne 0$ and $\val {p\phi(b)}_p=\val {pc}_p>1$, as required.
\end{proof}

Following \cite{Ke}, the $p$-group $G$ is said to have \emph{generalized finite $p$-rank} if
$$G\cong\mathbb{Z}^{(\rho_1)}_{p^{\sigma_1}}\oplus\dots\oplus \mathbb{Z}^{(\rho_n)}_{p^{\sigma_n}},$$
where all ordinals of the increasing sequence $\sigma_1<\dots <\sigma_n$ are in $\omega\cup\{\infty\}$, and
$\rho_1,\dots,\rho_n$ are cardinals with $\rho_j$ finite whenever $j > 1$.

\medskip

We are now prepared to prove our first major assertion, which sounds quite surprising.

\begin{theorem}\label{major} The $p$-group $G$ is semi-generalized co-Bassian if, and only if, $pG$ is generalized co-Bassian, i.e., $pG$ has generalized finite $p$-rank.
\end{theorem}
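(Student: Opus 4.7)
\emph{Necessity.} I would start by applying Lemma~\ref{1}: since a semi-generalized co-Bassian $p$-group cannot have an unbounded reduced part, one can write $G = D \oplus B$ with $D \cong \qc^{(\delta)}$ divisible and (by Prüfer) $B = \bigoplus_{i=1}^k \mathbb{Z}_{p^{n_i}}^{(\rho_i)}$ with $n_1 < \cdots < n_k$. The main point is then to show that among the cardinals $\{\rho_i : n_i \geq 2\} \cup \{\delta\}$, at most one can be infinite, and it must sit at the smallest exponent. If this failed, I would locate an index $i$ with $n_i \geq 2$, $\rho_i \geq 1$, together with some $\alpha \in \{n_{i+1}, \ldots, n_k, \infty\}$ whose associated rank $\kappa$ is infinite; Lemma~\ref{summands} then extracts $\mathbb{Z}_{p^{n_i}} \oplus \mathbb{Z}_{p^{\alpha}}^{(\kappa)}$ as a direct summand of $G$ (with $n_i \geq 2$, $\alpha > n_i$, $\kappa$ infinite), contradicting Proposition~\ref{leadin}. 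The remaining constraint is precisely that
\[
pG \;=\; \qc^{(\delta)} \,\oplus\, \bigoplus_{i:\, n_i \geq 2} \mathbb{Z}_{p^{n_i-1}}^{(\rho_i)}
\]
has generalized finite $p$-rank.

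\emph{Sufficiency, structural step.} Conversely, assume $pG$ has generalized finite $p$-rank. Unpacking this, together with the standard fact that a reduced $p$-group $R$ with $pR$ bounded is itself bounded, yields the structural decomposition
\[
G \;=\; D \oplus E \oplus \mathbb{Z}_{p^m}^{(\rho)} \oplus F,
\]
where $D$ is divisible, $E$ is elementary of arbitrary rank, $m \geq 2$, $\rho$ is possibly infinite, and $F$ is a \emph{finite} reduced $p$-group whose cyclic summands all have order strictly greater than $p^m$; moreover, $D$ has finite rank whenever $\mathbb{Z}_{p^m}^{(\rho)} \oplus F \neq 0$. (Any of the three rightmost summands may be zero.)

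\emph{Sufficiency, injection step.} Given an arbitrary injection $\phi: G \to \overline{G} := G/N$, the aim is to exhibit a direct summand $S$ of $\overline{G}$ containing $\phi(G)$ with $S[p] = \phi(G)[p]$ -- which, for a $p$-group, is exactly the meaning of $\phi(G)$ being essential in $S$. Since $\phi(D)$ is divisible, it splits off as a summand of $\overline{G}$, giving $\overline{G} = \phi(D) \oplus \overline{G}'$. Projecting $\phi$ along this decomposition yields an injection $\phi': E \oplus \mathbb{Z}_{p^m}^{(\rho)} \oplus F \to \overline{G}'$ with $\phi(G) = \phi(D) \oplus \phi'(E \oplus \mathbb{Z}_{p^m}^{(\rho)} \oplus F)$, reducing the task to producing a summand $S'$ of $\overline{G}'$ in which $\phi'(E \oplus \mathbb{Z}_{p^m}^{(\rho)} \oplus F)$ is essential.

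\emph{Main obstacle.} The construction of $S'$ is the crux. The homogeneous image $\phi'(\mathbb{Z}_{p^m}^{(\rho)})$ consists of elements of order exactly $p^m$ and -- mirroring the argument of Example~\ref{(1)}(ii) -- should be shown pure in the $p^m$-bounded part of $\overline{G}'$, whence a direct summand there by Kulikov. The finite $\phi'(F)$ is then absorbed into a finite summand realizing the correct heights of its generators, and $\phi'(E)$ is slotted into the socle. The delicate point is coordinating these three pieces into a \emph{single} summand whose socle equals $\phi'(E \oplus \mathbb{Z}_{p^m}^{(\rho)} \oplus F)[p]$; this is made possible precisely by the structural sparsity supplied by the generalized-finite-$p$-rank hypothesis on $pG$ -- the finiteness of $F$ and (outside the purely-divisible case) of $D$, together with the uniqueness of the possibly-infinite-rank exponent -- which together prevent any Ulm-invariant obstruction to assembling $S'$.
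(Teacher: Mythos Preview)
Your necessity argument is correct and is essentially the paper's: Lemma~\ref{1} forces the cocyclic decomposition, and Proposition~\ref{leadin} (via Lemma~\ref{summands}) rules out any infinite rank sitting strictly above the bottom non-elementary level.

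The sufficiency, however, has a genuine gap. Your structural decomposition $G=D\oplus E\oplus \mathbb Z_{p^m}^{(\rho)}\oplus F$ is correct, and splitting off $\phi(D)$ is harmless, but the ``Main obstacle'' paragraph is a sketch, not a proof. The assertion that $\phi'(\mathbb Z_{p^m}^{(\rho)})$ is pure in ``the $p^m$-bounded part of $\overline G'$'' is neither well-defined nor justified: $\overline G'$ has no canonical $p^m$-bounded summand, and nothing prevents generators of $\phi'(\mathbb Z_{p^m}^{(\rho)})$ from acquiring positive height in $\overline G'$ (the summand $F$ contributes elements of order $>p^m$, so $\overline G'$ is in general only $p^M$-bounded for some $M>m$). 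You then concede that coordinating the three pieces is ``the delicate point'' but offer only a gesture toward Ulm invariants rather than an argument.

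The paper avoids this coordination problem by \emph{not} separating the pieces. It sets $H=\mathbb Z_{p^n}^{(\kappa)}\oplus F$ (with any finite-rank divisible part absorbed into $F$) and observes that $p^nG=p^nH$ has finite $p$-rank, hence is co-Bassian. Since $\phi$ and $\pi$ restrict to an injection and a surjection $p^nG\to p^n\overline G$, co-Bassianness yields $p^n\hat H=p^n\hat G=p^n\overline G$ in one stroke. Because every cocyclic summand of $H$ has order at least $p^n$, this equality makes $\hat H$ pure in $\overline G$ with $p^n$-bounded quotient, hence a direct summand: $\overline G=\hat H\oplus K$ with $p^nK=0$. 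Then $\hat G\cap K\subseteq K[p]$, and any subsocle of the bounded group $K$ supports a summand $K'$; so $\hat H\oplus K'$ is the desired summand of $\overline G$ containing $\hat G$ essentially. The single invocation of the co-Bassian property of $p^nG$ is precisely the missing idea that dissolves your coordination problem.
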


\begin{proof} It is easily verified that $G$ satisfies this condition exactly when there is a decomposition $G=E\oplus H$, where $E$ is a $p$-high subgroup of $G$ and $H$ is a group that has generalized finite $p$-rank with no elementary summands.

Firstly, suppose $G$ is semi-generalized co-Bassian. Using Lemmas~\ref{summands} and \ref{1}, $G$ must be of the form:
$$
       G=\mathbb Z_{p^{\alpha_1}}^{(\kappa_1)}\oplus \mathbb Z_{p^{\alpha_2}}^{(\kappa_2)} \oplus \cdots \oplus \mathbb Z_{p^{\alpha_k}}^{(\kappa_k)},
$$
where the $\kappa$s are all (non-zero) cardinals and $\alpha_1<\alpha_2<\cdots <\alpha_k$ are either positive integers or $\infty$. The result, therefore, follows directly from the above Proposition~\ref{leadin}.

Conversely, suppose that  $G=E\oplus H$ possesses the above form. By definition, $H\cong \mathbb Z_{p^\alpha}^{(\kappa)} \oplus F$, where $\alpha>1$ is either an integer or $\infty$, $\kappa$ is a cardinal, $F$ has finite $p$-rank and $F[p]=p^\alpha H[p]$ when $\alpha$ is an integer and $F=0$ when $\alpha=\infty$. If  $\kappa$ is finite, then $G$ itself is generalized co-Bassian, and hence semi-generalized co-Bassian, as desired. So, we may assume hereafter that $\kappa$ is infinite.

Let $N$ be a subgroup of $G$, and set $\overline G:=G/N$ and $\phi: G\to \overline G$ is an injective homomorphism. As in \cite K, for any subgroup $A$ of $G$, we let $\hat A=\phi(A)$ and $\overline A$ be the image of $A$ under the canonical homomorphism $\pi:G\to \overline G$.  We need to show $\hat G=\phi(G)$ is an essential subgroup of a summand of $\overline G$.

To that goal, suppose first that $\alpha=\infty$; so, $H=\mathbb Z_{p^\infty}^{(\kappa)}$ is the maximal divisible subgroup of $G$ and $F=0$. Thus, $\overline H$ is divisible and $p(\overline G/\overline H)=0$, so that $\overline G=E'\oplus \overline H$, where $E'$ is also elementary.

Clearly, $\hat H$ will also be divisible, so that $\overline H=D\oplus \hat H$, where $D$ is divisible. If $$E'':=\hat G\cap (E'\oplus D)\subseteq (E'\oplus D)[p],$$ it follows that $\hat G=E''\oplus \hat H$. But it also plainly follows that any subsocle of a direct sum of an elementary group and a divisible group is supported by a summand of the containing group (see \cite{F1} as well). In particular, this means that $\hat G=E''\oplus \hat H$ is an essential subgroup of a summand of $G$, as required.

The proof where $\alpha$ is a positive integer is similar; for clarity, denote $\alpha$ by $n$. Since $p^nG=p^nH$ is co-Bassian and $\phi$ and $\pi$ restrict to an injection and surjection, respectively, on $p^nG$, it follows that $p^n \hat H=p^n\hat G=p^n\overline G$. Since $H$ has no summands isomorphic to $\mathbb Z_{p^m}$ for $m<n$, this readily implies that $\overline G=\hat H\oplus K$, where $p^n K=0$. It follows that $\hat G\cap K\subseteq K[p]$. And since $K$ is bounded, $\hat G\cap K$ supports a summand $K'$ of $K$. Consequently, $K'\oplus \hat H$ will be a summand of $\overline G$ containing $\hat G$ as an essential subgroup, as wanted.
\end{proof}

The pivotal instrument, needed to establish our second major assertion listed below, is the following.

\begin{proposition}\label{3} If $G$ is a \sgcb\ group and $p$ is a prime, then $T_p$ is \sgcb.
\end{proposition}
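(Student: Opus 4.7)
The plan is to promote $T_p$ to an outright direct summand of $G$ and then invoke Lemma~\ref{summands}; the real work lies in forcing $T_p$ to have a controllable structure using the \sgcb\ hypothesis on $G$.

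To begin, I would decompose $T_p = D \oplus R$, where $D$ is the maximal divisible subgroup of $T_p$ (hence a direct sum of copies of $\mathbb Z(p^\infty)$) and $R$ is the reduced complement. Since $D$ is divisible as an abelian group, it is automatically a direct summand of $G$, so we may write $G = D \oplus G_1$; by Lemma~\ref{summands} the complement $G_1$ is itself \sgcb. Separating $p$-primary torsion in this decomposition immediately yields $T_p(G_1) = R$.

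The central step is to show that $R$ is bounded. If $R$ were unbounded, then $T_p(G_1) = R$ would be a reduced unbounded $p$-component of the \sgcb\ group $G_1$, contradicting Lemma~\ref{1} outright. Hence $R$ must be bounded. Now $R = T_p(G_1)$ is pure in $G_1$, since the full torsion subgroup $T(G_1)$ is pure in $G_1$ and $T_p(G_1)$ is a direct summand of $T(G_1)$; and every bounded pure subgroup of an abelian group is a direct summand (cf.\ \cite[Theorem~27.5]{F1}). Consequently $G_1 = R \oplus G_2$, so that
$$G = D \oplus R \oplus G_2 = T_p \oplus G_2,$$
exhibiting $T_p$ as a direct summand of $G$. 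One last application of Lemma~\ref{summands} transfers the \sgcb\ property from $G$ to $T_p$, as required.

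I expect the only genuine obstacle to be the boundedness of the reduced part $R$: the remaining manipulations (splitting off divisible subgroups, purity of $p$-torsion, and bounded-pure-implies-summand) are standard formal facts, but it is precisely in ruling out an unbounded reduced $p$-torsion part that the \sgcb\ hypothesis is consumed, via the contrapositive form of Lemma~\ref{1} applied to the summand $G_1$.
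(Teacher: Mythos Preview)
Your proof is correct and shares the paper's core idea: split off the divisible part $D$ of $T_p$, then apply Lemma~\ref{1} to the complement $G_1$ to force the reduced part $R$ to be bounded. The execution differs in the endgame, however. The paper separates into cases according to whether $D$ has finite or infinite rank and finishes by invoking Theorem~\ref{major} and Proposition~\ref{leadin} to certify that $T_p$ is \sgcb. You bypass all of this by simply observing that once $R$ is bounded, $T_p = D \oplus R$ is (divisible) $\oplus$ (bounded pure), hence an outright summand of $G$, so a single application of Lemma~\ref{summands} suffices. Your route is more elementary and avoids any appeal to the classification Theorem~\ref{major}; the paper's route, on the other hand, extracts slightly more structural information about $T_p$ along the way, which is not needed for the bare statement of the proposition.
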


\begin{proof} Suppose $G$ is a \sgcb\ group, $p$ is a prime and $T_p=R\oplus D$, where $R$ is reduced and $D$ is divisible. If $D$ has infinite $p$-rank, then by Proposition~\ref{leadin} and Lemma~\ref{summands}, $R=\{0\}$ so that $T_p$ is \sgcb. So, we may assume that $D$ has finite rank. If $G=G'\oplus D$, where $R=T_{G'}$, then by Lemma~\ref{summands}, $G'$ is \sgcb, which by Lemma~\ref{1} implies that $R$ is bounded. The fact that $T_p$ is \sgcb\ then quickly follows from Theorem~\ref{major}, Proposition~\ref{leadin} and Lemma~\ref{summands}.
\end{proof}

We next present a full characterization of the \sgcb\ group of finite torsion-free rank.

\begin{theorem}\label{finiterank} If $G$ has finite torsion-free rank, then $G$ is \sgcb\ if, and only if, for every prime $p$, $T_p$ is \sgcb\ and either (a) $T_p$ has finite $p$-rank, or (b) $T_p$ is divisible, or (c) $G/T$ is $p$-divisible.
\end{theorem}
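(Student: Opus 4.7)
Plan: I would prove the theorem in two directions, with the forward one carrying the main content.

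For the \textbf{forward direction} (contrapositive), suppose that for some prime $p$ none of (a), (b), (c) holds: $T_p$ has infinite $p$-rank, $T_p$ is not divisible, and $G/T$ is not $p$-divisible. By Proposition~\ref{3}, $T_p$ is itself \sgcb, and the analysis used in its proof (together with Lemma~\ref{1} and Theorem~\ref{major}) yields $T_p = R \oplus D$ with $R$ nonzero, bounded, of infinite $p$-rank, and $D$ divisible of finite rank. Applying Theorem~\ref{major} to $pT_p = pR \oplus D$ further pins down the structure of $R$: $T_p$ must contain a summand of the form $\bigoplus_{\alpha < \kappa} \Z_{p^m}$ for some infinite cardinal $\kappa$ and some $m \in \{1,2\}$ (the minimal exponent with infinite multiplicity). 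Pick $g \in G$ with $g + T \notin p(G/T)$ and let $\{e_\alpha\}_{\alpha < \kappa}$ denote the distinguished basis.

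The key construction is an injection $\phi : G \to G/N$ (with a suitable $N \subseteq T$, possibly $N = 0$ when $G$ splits over $T$) satisfying $\phi(g) = pg + e_0$ and $\phi(e_\alpha) = e_{\alpha+1}$, modeled on the bad injections in Lemma~\ref{1} and Proposition~\ref{leadin}. The obstruction is a $p$-height jump: $\phi(g)$ has $p$-height $0$ in $G/N$, while $p\phi(g) = p^2 g$ has $p$-height at least $1$. Assuming $\phi(G)$ were essential in a summand $S$ of $G/N$, analyzing essentiality modulo the (maximal) divisible subgroup of $T_{G/N}$ forces the reduced torsion part of $S$ to equal the shifted subsocle $\bigoplus_{\alpha \geq 1}\Z_{p^m} e_\alpha$, which does not contain $e_0$; yet writing $\phi(g) = ry + t$ with $y$ a torsion-free generator of $S$ and using $pR = 0$ or $p^2 R = 0$ as appropriate forces $e_0$ into the reduced torsion part of $S$, a contradiction.

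For the \textbf{reverse direction}, assume each $T_p$ is \sgcb\ and one of (a), (b), (c) holds for every $p$. Given an injection $\phi : G \to \overline G = G/N$, the finite-torsion-free-rank hypothesis forces $N \subseteq T$, so $N = \bigoplus_p N_p$ with $N_p = N \cap T_p$. I handle the primes one by one and patch: for primes where (b) holds, split off the divisible $T_p$ as a summand of $G$ and hence of $\overline G$; for primes where (a) holds, exploit the finite $p$-rank of $T_p$ together with $T_p$ being \sgcb; for primes where (c) holds, use that $p(G/T) = G/T$ to align the torsion-free quotient with the \sgcb\ structure of $T_p$. Patching the per-prime data then assembles a summand of $\overline G$ containing $\phi(G)$ essentially.

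The \textbf{main obstacle} is the forward-direction construction. Extending $\phi$ consistently to all of $G$ is delicate when $G$ does not split over $T$, and is expected to require quotienting by a carefully chosen $N \subseteq T$ rather than working with $N = 0$. The height-jump contradiction also needs careful bookkeeping of the interaction between $R$, $D$, and the shifted subsocle, especially in the case $m = 2$, where the cyclic structure of $T/(\text{shifted subsocle})$ obstructs potential direct-sum decompositions of $T_S$. In the reverse direction, synthesizing the local summands into a global summand of $\overline G$ when different primes invoke different cases is a further point requiring care.
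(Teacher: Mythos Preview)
Your forward direction is close in spirit to the paper's, but you are making it harder than necessary and introducing an error. Since $T_p$ is \sgcb\ with divisible part of finite rank, $T_p$ is (bounded)$\oplus$(finite-rank divisible) and hence a direct summand of $G$; write $G=T_p\oplus A$. This splitting dissolves the ``delicate'' extension problem you flag as the main obstacle --- it simply does not arise, and no auxiliary quotient by $N\subseteq T$ is needed. Also, your claim that the minimal infinite-multiplicity exponent $m$ lies in $\{1,2\}$ is wrong: Theorem~\ref{major} only forces $pT_p$ to have generalized finite $p$-rank, so the infinite homocyclic block can have any exponent. The paper instead takes a cyclic summand $C=\langle c\rangle$ of $T_p$ of \emph{minimal} order $p^k$, writes $T_p=C\oplus K$, defines $\phi$ on $A$ by $\phi(x)=\gamma(x)+px$ (with $\gamma:A\to C$ built from $A/p^kA$) and on $T_p$ by a shift into $K$, and derives the contradiction via Ulm invariants: $\hat G[p]=K[p]$ keeps $p^{k-1}c$ out of $U_{k-1}(S)$, while a height computation on $p^{k-1}\phi(a)$ produces an element of $U_{k-1}(S)$ congruent to $p^{k-1}c$ in $U_{k-1}(G)$. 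Your height-jump sentence ``$p\phi(g)=p^2g$ has $p$-height at least $1$'' is not yet a contradiction; you need this Ulm-invariant mismatch (or an equivalent).

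The genuine gap is the reverse direction. ``Handle the primes one by one and patch'' hides the entire content. The paper partitions the primes into $\mathcal P_1$ ($G/T$ is $p$-divisible), $\mathcal P_2$ (finite $p$-rank), $\mathcal P_3$ ($T_p$ divisible), gets per-prime decompositions $\overline T_p=L_p\oplus B_p$ with $\hat T_p$ essential in $L_p$, and then --- this is the step you are missing --- proves that $X:=\overline G/(\overline T+\hat G)\cong(\overline G/\overline T)/(\hat G/\hat T)$ is a \emph{finite} group with no $p$-torsion for $p\in\mathcal P_1$. One lifts generators $x_0,\dots,x_k$ of $X$ to $\overline G$ and, using that $B_p$ is divisible for $p\in\mathcal P_2\cup\mathcal P_3$ (zero for $\mathcal P_2$, by hypothesis for $\mathcal P_3$), adjusts the $x_i$ so that $n_ix_i\in\hat G$. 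Then $S=\hat G+\langle x_0,\dots,x_k\rangle+L$ is the desired summand with complement $B=\bigoplus_p B_p$. Nothing in your outline addresses how the torsion-free part of $\hat G$ is made to line up with the torsion complements $B_p$, and without this finite-quotient/lifting argument the local summands do not assemble into a global one.
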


\begin{proof} Suppose $G$ is \sgcb\ and $p$ is a prime. By Proposition~\ref{3}, $T_p$ is \sgcb. Assume $G/T$ is not $p$-divisible and $T_p$ has infinite $p$-rank; we need to show that $T_p$ is divisible. Assume not, so that it has a non-zero cyclic summand $C=\langle c\rangle$ of minimal order $p^k$. It is straightforward to verify that there is a decomposition $T_p=C\oplus K$ and an injection $\phi:T_p\to K$ such that $\phi(T_p[p])=K[p]$.

Since $T_p$ is the direct sum of a bounded and a divisible group, there is a decomposition $G=T_p\oplus A=C\oplus K\oplus A$; we are therefore assuming that multiplication by $p$ is an injective, but not surjective, endomorphism on $A$. It follows that $A/p^k A$ is isomorphic to a direct sum of copies of $\mathbb Z_{p^k}$. So, if $a\in A$ has $p$-height 0, then $a+p^k A$ will have order $p^k$ in $A/p^k A$, and hence will generate a summand. Thus, there is a composite of natural homomorphisms
$$\gamma:A\to A/p^k A\to \langle a+p^k A\rangle \to C$$ such that $\gamma(a)=c$. Extend $\phi$ to an injection $\phi:G\to G$ by setting, for all $x\in A$,  $\phi(x)=\gamma(x)+px$.

Letting $\pi:G\to \overline G:=G$ be the identity, then since $G$ is \sgcb, we can conclude that $\hat G=\phi(G)$ is an essential subgroup of some summand $S$ of $G$. Since $\hat G[p] =K[p]$, $p^{k-1}c$ does not represent an element of the Ulm factor $U_{k-1}(S)\subseteq U_{k-1}(G)$. On the other hand, considering $p$-heights, $$\val {p^{k-1} \phi(a)}=\val {p^{k-1}c+p^k a}=k-1$$ and $$\val {p^k \phi(a)}=\val {p^{k+1} a}=k+1,$$ so  there is an element $s\in S$ such that $\val s=k$ and $ps=p^k \phi(a)$. Therefore, $p^{k-1}\phi(a)-s$ does represent an element of $U_{k-1}(S)$. But since
$$\val {(p^{k-1}\phi(a)-s)-p^{k-1} c}=\val {p^k a-s}\geq k,$$ we can conclude that $p^{k-1}\phi(a)-s$ and $p^{k-1}c$ represent the same element of $U_{k-1}(G)$, giving a contradiction. Therefore, $G$ must be divisible, completing the proof of sufficiency.

\smallskip

Conversely, suppose that for all primes $p$, $T_p$ is \sgcb\ and if $G/T$ is not $p$-divisible, then either $G$ has finite $p$-rank or $T_p$ is divisible. To show $G$ is \sgcb, suppose $\pi$ and $\phi$ are, respectively, surjective and injective homomorphisms $G\to \overline G$.

Let $\mathcal P_1$ be the collection of primes such that $G/T$ is $p$-divisible, and $\mathcal P_2$ be those primes $p\not\in \mathcal P_1$ for which $T_p$ has finite $p$-rank and $\mathcal P_3$ be the remaining primes; so if $p\in \mathcal P_3$, then $G/T$ is not $p$-divisible and $T_p$ is divisible (of infinite rank).

If $N$ is the kernel of $\pi$, then the fact that $G$ has finite torsion-free rank implies that $N\subseteq T$ and $T_{\overline G}=\overline T$. Therefore, for each prime $p$, $\pi$ and $\phi$ restrict to surjective and injective homomorphisms $T_p\to \overline T_p$. Since we are assuming $T_p$ is \sgcb, we can conclude that there is a decomposition $\overline T_p=L_p\oplus B_p$, where $L_p$ contains $\hat T_p$ as an essential subgroup. Note that if $p\in \mathcal P_2$, then $L_p = \hat T_p=\overline T_p$ and $B_p=0$; and if $p\in \mathcal P_3$, then both $L_p=\hat T_p$ and $B_p$ are divisible. Letting $L=\oplus_p L_p$ and $B=\oplus_p B_p$, then $\hat T$ is essential in $L$ and $\overline T=L\oplus B$.

Observe that there are isomorphisms of torsion-free finite rank groups $\hat G/\hat T\cong G/T\cong \overline G/\overline T$. Therefore, since $\hat G/\hat T$ clearly embeds in $\overline G/\overline T$ and they are isomorphic,
$$
     \overline G/[\overline T+\hat G]\cong (\overline G/\overline T)/(\hat G/\hat T):=X  \eqno {(\dag)}
$$
is a finite group.  And since both groups in this quotient are $p$-divisible for all primes $p\in \mathcal P_1$, it follows that $X$ has no $p$-torsion whenever $p\in \mathcal P_1$.

Suppose $x_0, \dots x_k$ in $ \overline G$ project onto a linearly independent generating set for $X$. If $n_i$ is the order of the image of $x_i$ in $X$, then $n_i$ is not divisible by any prime from $\mathcal P_1$. So, there is a finite set of primes $F\subseteq \mathcal P_2\cup \mathcal P_3$, such that each $n_i$ is only divisible by primes from $F$.

For each $i$, there is a $y_i\in \hat G$ and $z_i\in \overline T$ such that $n_ix_i=y_i+z_i$.  For each $i$ there is an $m_i$ not divisible by any prime of $F$ such that $m_iz_i$ has order divisible only by primes in $F$. Clearly, the $m_ix_i$ still project to a linearly independent generating set for $X$, so replacing $x_i, y_i$ and $z_i$ by $m_ix_i,m_i y_i$ and $m_iz_i$ respectively, we may assume that each $z_i$ is in $\oplus_{p\in F} \overline T_p= \oplus_{p\in F} (\hat T_p\oplus B_p)$. We may clearly absorb each component of $z_i$ in this decomposition that lives in $\hat T_p$ into the corresponding element $y_i\in \hat G$. So, we may assume $z_i\in \oplus_{p\in F}  B_p$. But since $F\subseteq   \mathcal P_2\cup \mathcal P_3$, the direct sum $\oplus_{p\in F}  B_p$ is divisible. Let $w_i\in \oplus_{p\in F}  B_p$ satisfy $n_i w_i = z_i.$  So, $x_i':=x_i-w_i$ represent the exact same elements of $X$, but now, $n_i x_i'=y_i$.

Replacing $x_i$ by $x_i'$ and letting $$S=\hat G+\langle x_1, \dots, x_k\rangle+ L\subseteq \overline G,$$ we claim that $T_S=L$: The containment $\supseteq$ being obvious, suppose $s\in T_S$; we need to show $s\in L$. By definition, there is $u\in \hat G$, $v\in L$ and integers $j_0, \dots, j_k$ such that $$s= u+j_0x_0+\cdots + j_kx_k+v.$$ If we map this relation into the quotient $X$, if readily follows that for each $i$, that  $j_i x_i\in \hat G$. Therefore, $$u+j_0x_0+\cdots + j_kx_k=s-v\in \hat G\cap \overline T=\hat T\subseteq L.$$ This gives $s=v+(s-v)\in L$, as required.

Notice also that $(\dag)$ also implies that $\overline G=S+\overline T$.
We have $\overline T= L\oplus B=T_S\oplus B$, so that $S+B=S+\overline T=G$ and $S\cap B=T_S\cap B=0$. Therefore, $\overline G=S\oplus B$.

And finally, since $\hat T$ is essential in $L=T_S$ and $S/\hat G$ is torsion, it readily follows that $\hat G$ is an essential subgroup of $S$, as required.
\end{proof}

The next result follows immediately from Theorem~\ref{finiterank}.

\begin{corollary}\label{7} The finite torsion-free rank group $G$ is \sgcb\ if, and only if, for every prime $p$, its localization $G_{(p)}$ is \sgcb.
\end{corollary}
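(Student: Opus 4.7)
The strategy is to apply Theorem~\ref{finiterank} to both $G$ and each localization $G_{(p)}$, and verify that the resulting conditions match, prime by prime, via standard properties of $p$-localization.

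First I would record the three structural facts about $G_{(p)} := G \otimes \mathbb{Z}_{(p)}$ that will do all the work. Since $G$ has finite torsion-free rank: (a) $G_{(p)}$ has the same (finite) torsion-free rank as $G$, so Theorem~\ref{finiterank} is applicable to it; (b) the torsion subgroup of $G_{(p)}$ is exactly $T_p$, because all $q$-torsion for $q \ne p$ is killed when the coprime integers are inverted, while $T_p$ embeds; (c) $G_{(p)}/T_p \cong (G/T)_{(p)}$. The group in (c) is automatically $q$-divisible for every prime $q \neq p$, and since $G/T$ is torsion-free, $(G/T)_{(p)}$ is $p$-divisible if and only if $G/T$ is $p$-divisible (the injective map $G/T\to(G/T)_{(p)}$ together with a Bezout relation $1 = ap + bn$ for $n$ coprime to $p$ gives the nontrivial direction).

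Next I would apply Theorem~\ref{finiterank} to $G_{(p)}$. The criterion requires, for each prime $q$, that the $q$-component of the torsion of $G_{(p)}$ be \sgcb, and that one of the three disjuncts hold. For $q \neq p$ this component is zero, hence trivially \sgcb\ of finite $q$-rank; the condition is automatic. For $q = p$ it becomes the single requirement: $T_p$ is \sgcb, and either $T_p$ has finite $p$-rank, or $T_p$ is divisible, or $(G/T)_{(p)}$ is $p$-divisible. By fact (c) above, the last alternative is equivalent to $G/T$ being $p$-divisible. Thus $G_{(p)}$ is \sgcb\ if and only if the Theorem~\ref{finiterank} condition at the single prime $p$ holds for $G$.

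Finally I would simply quantify over all primes. Theorem~\ref{finiterank} tells us $G$ is \sgcb\ iff the condition holds at every prime $p$; by the previous paragraph, this in turn is equivalent to $G_{(p)}$ being \sgcb\ for every prime $p$. I do not anticipate any real obstacle here; the only step that needs care is verifying that $p$-divisibility of $G/T$ is preserved and reflected by $p$-localization, which as noted is immediate because $G/T$ is torsion-free.
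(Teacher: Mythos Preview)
Your proposal is correct and follows exactly the approach implicit in the paper, which simply declares the corollary to be immediate from Theorem~\ref{finiterank}. You have just written out carefully the details the authors omit: identifying the torsion of $G_{(p)}$ with $T_p$, the quotient with $(G/T)_{(p)}$, and checking that $p$-divisibility of the torsion-free quotient is preserved and reflected under $p$-localization.
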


Having characterized the \sgcb\ groups of finite rank, we turn to a consideration of those with infinite rank. We will say a group is {\it D+E} if it is isomorphic to $D\oplus E$, where $D$ is divisible and $E$ is elementary.

\begin{proposition}\label{4} If $G$ is \sgcb\ of infinite torsion-free rank, then $G/T$ is divisible and $T$ is {\rm D+E}.
\end{proposition}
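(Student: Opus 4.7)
The plan is to establish the two conclusions separately, each by contradiction and each via the construction of an obstructing injection whose existence would violate the \sgcb\ hypothesis on $G$.

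\medskip

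\noindent\emph{Part 1 (torsion part is {\rm D+E}).} Fix a prime $p$. By Proposition~\ref{3}, $T_p$ is \sgcb, so Theorem~\ref{major} yields a decomposition $T_p=E_p\oplus H_p$ with $E_p$ elementary and $H_p$ of generalized finite $p$-rank having no elementary summand. In particular, $T_p$ is {\rm D+E} exactly when $H_p$ is divisible. Suppose $H_p$ is not divisible. Then it has a cyclic summand $\langle b\rangle$ of order $p^k$ with $k\geq 2$, and because $\langle b\rangle$ is a pure bounded $p$-subgroup it splits off as $G=\langle b\rangle\oplus G'$; the torsion-free rank of $G'$ equals that of $G$, hence is infinite. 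I would then construct an injective $\phi:G\to\overline G:=G/N$ imitating Proposition~\ref{leadin}, with $N=\langle pb\rangle$ so that $\bar b$ has order $p$ in $\overline G$, and $\phi(b)=\bar b+c$, where $c$ is manufactured from a torsion-free element $a\in G'$. Here the infinite torsion-free rank plays the role of the ``$\mathbb Z_{p^\alpha}^{(\kappa)}$''-summand in Proposition~\ref{leadin}. The same $p$-height obstruction then prevents $\phi(G)$ from being essential in any summand of $\overline G$, giving the required contradiction.

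\medskip

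\noindent\emph{Part 2 (quotient $G/T$ is divisible).} Granting $T$ is {\rm D+E}, assume for contradiction that $G/T$ fails to be $p$-divisible for some prime $p$. Then there exists a torsion-free $a\in G$ of $p$-height zero modulo $T$. Using the {\rm D+E} decomposition of $T_p$, select an appropriate cyclic $p$-torsion piece $C=\langle c\rangle\subseteq T_p$ (drawn from $E_p$ if $E_p\neq 0$, otherwise from a cocyclic element of the divisible part). I would then mirror the necessity direction of Theorem~\ref{finiterank}: build the natural surjection $\pi:G\to \overline G:=G/N$ for a suitable $N$, and define $\phi:G\to\overline G$ transporting $a$ to a combination involving $c$ and $pa$. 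The Ulm-height computation that closes the proof of Theorem~\ref{finiterank} carries over to forbid $\phi(G)$ from being essential in any summand of $\overline G$.

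\medskip

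\noindent\emph{Main obstacle.} The real delicacy is that the construction in Theorem~\ref{finiterank} leaned on $T_p$ having \emph{infinite} $p$-rank in order to exhibit an injection $T_p\to K$ hitting the whole socle $K[p]$; in our present setting $T_p$ may have only finite $p$-rank, so that device is unavailable. One must instead leverage the infinite torsion-free rank of $G$ to supply the ``room'' previously provided by $T_p$, while retaining enough control over $p$-heights to keep the obstruction intact. Balancing these two demands is the technical heart of the argument, and is precisely the kind of subtlety that the authors highlight in the introduction as separating the infinite-rank case from the finite-rank one.
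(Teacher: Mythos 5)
Your overall strategy (two separate contradictions, each built from an obstructing injection plus a $p$-height computation forcing the $p$-torsion of the would-be summand to be divisible) matches the paper's in spirit, but both of your constructions have gaps at exactly the points where the real work happens. In Part 1, taking $N=\langle pb\rangle$ cannot work: the quotient $G/\langle pb\rangle$ may admit no injection from $G$ at all. For instance, if $G=\mathbb Z_{p^2}\oplus \mathbb Q^{(\kappa)}$, then $G/\langle pb\rangle\cong\mathbb Z_p\oplus\mathbb Q^{(\kappa)}$ contains no element of order $p^2$, and there is no way to ``manufacture'' a torsion element $c$ of order $p^k$ from a torsion-free $a\in G'$, since $G'$ is untouched by your quotient. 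The paper's device is to choose the surjection so that the target acquires \emph{new} torsion: writing $G=C\oplus A$, it surjects $A$ onto $Z\oplus D$ with $Z\cong\mathbb Z_{p^\infty}$ and $D$ a divisible hull of $A$ (this surjection is precisely where the infinite rank of $A$ is used), sets $\overline G:=C\oplus Z\oplus D$, embeds $A$ into $D$, and sends $c\mapsto pc+z$ with $z\in Z$ of order $p^k$; the contradiction is then that $\hat G[p]\subseteq(Z\oplus D)[p]$ consists of elements of infinite height, so the $p$-torsion of any summand containing $\hat G$ essentially would be divisible, while $\phi(c)$ is a $p$-torsion element of height $1$.

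In Part 2, your plan to ``mirror the necessity direction of Theorem~\ref{finiterank}'' by selecting a cyclic piece $C\subseteq T_p$ breaks down when $T_p$ is trivial or divisible --- for example, $G=\mathbb Z^{(\kappa)}$ with $\kappa$ infinite has $T=0$ and $G/T$ not $p$-divisible, yet must be shown not to be \sgcb. The paper avoids torsion entirely here: it takes $A\supseteq T$ with $G/A$ of order $p$, a surjection $\gamma:A\to D$ onto a divisible hull $D$ of $G$ (again using the infinite rank, after first reducing to the case where $G$ has cardinality equal to its rank), sets $\overline G:=(G/A)\oplus D$ with $\pi(x)=(x+A)+\gamma(x)$ and $\phi(x)=(x+A)+x$, and derives the contradiction from $\val{\phi(x)}_{\overline G}=0$ versus $\val{p\phi(x)}_{\overline G}=\infty$ for $x\notin A$. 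You correctly flag in your ``main obstacle'' paragraph that the finite-rank device of Theorem~\ref{finiterank} is unavailable, but the resolution is not a patch of that argument; it is the divisible-hull surjection construction, which your proposal does not supply and which is the technical heart of the proof.
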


\begin{proof} Let $\kappa$ be the rank of $G$. Since each $T_p$ is \sgcb, $T$ will be a direct sum of cocyclic groups.  This easily implies that there is a decomposition $G=G'\oplus T'$, where $T'$ is torsion, $G'$ has cardinality $\kappa$ and $T$ is D+E if, and only if, that condition holds for $T_{G'}$. So, by Lemma~\ref{summands}, there is no loss of generality in assuming that $G$ has cardinality and rank $\kappa$.

Suppose first that $G/T$ is not $p$-divisible for some $p$. Let $A$ be a subgroup of $G$ containing $T$ such that $G/A$ has order $p$ and let $\tau: G\to G/A$ be the usual epimorphism.  If $D$ is a divisible hull for $G$, then there is clearly a surjective homomorphism $A\to D$ which extends to a homomorphism $\gamma:G\to D$.

The homomorphism $\pi: G\to \overline G:=(G/A)\oplus D$ given by $\pi(x)=(x+A)+\gamma (x)$ is clearly onto. The homomorphism $\phi:G\to (G/A)\oplus D$  given by $\phi(x) = (x+A)+ x$ is clearly injective.

If $G$ were \sgcb, then $\hat G$ would be contained as an essential subgroup of a summand $S\subseteq \overline G$. Since $S[p]=0\oplus T[p]\subseteq  D\subseteq \overline G$, we could conclude that the $p$-torsion subgroup of $S$ is divisible. If $x\not\in A$, then $\val {\phi(x)}_S=\val {\phi(x)}_{\overline G}=0$, and $\val{p\phi(x)}_S=\val {p\phi(x)}_{\overline G}=\infty$, which cannot happen when the $p$-torsion of $S$ is divisible.

Suppose now that $G$ has a cyclic summand, $C=\langle c\rangle$, of order $p^k$, where $k>1$. Let $G=C\oplus A$. If $D$ is a divisible hull for $A$ and $Z\cong \mathbb Z_{p^\infty}$, then there is clearly a surjection $$\pi:A\to Z\oplus D$$ and an injection $$\phi:A\subseteq D\subseteq Z\oplus D.$$ Extending $\pi$ to $G$ by setting it equal to the identity on $C$ makes it a surjection $$G\to \overline G:=C\oplus Z\oplus D.$$ Choosing $z\in Z$ of order $p^k$ and extending $\phi$ by setting $\phi(c)=pc+z$ turns it into an injection $\phi:G\to \overline G$. Again $\hat G[p]\subseteq (Z\oplus D)[p]$, so that all elements of $\hat G[p]$ have infinite height. So, if $\hat G$ were contained as an essential subgroup of a summand $S$, then the $p$-torsion subgroup of $S$ would be divisible. This, however, contradicts the fact that $\val {\phi(c)}_{\overline G}=1\ne \infty$.
\end{proof}

As a direct consequence, we yield:

\begin{corollary} Suppose $G$ is a \sgcb\ group of infinite torsion-free rank and $G\cong D\oplus R$, where $D$ is divisible and $R$ is reduced. Then, $T_R$ is elementary and $R/T_R$ is divisible.
\end{corollary}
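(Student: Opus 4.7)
The plan is to reduce the claim immediately to Proposition~\ref{4} and then unpack what the decomposition $G\cong D\oplus R$ contributes. Since $G$ is semi-generalized co-Bassian of infinite torsion-free rank, Proposition~\ref{4} supplies two facts: the torsion subgroup $T=T_G$ is D+E (say $T=D'\oplus E$ with $D'$ divisible and $E$ elementary), and $G/T$ is divisible. Everything else should follow from elementary direct-sum bookkeeping, so the write-up will be short.

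First I would record that the hypothesis $G=D\oplus R$ yields the componentwise decomposition $T=T_D\oplus T_R$, where $T_D$ is divisible (because $D$ itself is divisible, hence $T_D$ is a direct summand that is divisible) and $T_R$ is reduced (as a subgroup of the reduced group $R$). I would then invoke the uniqueness of the maximal divisible subgroup of $T$: in the presentation $T=D'\oplus E$ the maximal divisible subgroup is $D'$, while in the presentation $T=T_D\oplus T_R$ it is $T_D$ (since $T_R$ is reduced). Therefore $T_D=D'$ and, passing to quotients, $T_R\cong T/T_D\cong E$ is elementary, which is the first conclusion.

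For the second conclusion, I would compute
\[
G/T=(D\oplus R)/(T_D\oplus T_R)\cong (D/T_D)\oplus (R/T_R).
\]
By Proposition~\ref{4} the left-hand side is divisible, and a direct sum of abelian groups is divisible if and only if each summand is, so in particular $R/T_R$ is divisible. This completes the argument.

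The only potential pitfall I foresee is justifying the identification $T_R\cong E$ rigorously, but this is handled cleanly by the uniqueness of the maximal divisible subgroup; nothing in the proof requires any new machinery beyond Proposition~\ref{4} and the fact that a summand of a reduced group is reduced and a summand of a divisible group is divisible. So there is no real obstacle; the corollary is essentially a repackaging of Proposition~\ref{4} under the additional hypothesis that one has already split off the maximal divisible summand.
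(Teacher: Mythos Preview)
Your argument is correct and is exactly the intended derivation: the paper presents the corollary with no separate proof, labeling it ``a direct consequence'' of Proposition~\ref{4}, and you have simply supplied the routine bookkeeping (splitting $T=T_D\oplus T_R$, identifying $T_D$ with the maximal divisible subgroup of $T$, and reading off the divisibility of $R/T_R$ from that of $G/T$).
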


The following is a partial converse to Proposition~\ref{4}.

\begin{proposition}\label{5} If the group $G$ itself is {\rm D+E}, then $G$ is \sgcb.
\end{proposition}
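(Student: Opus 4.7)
The plan is to verify the definition directly: given an arbitrary $N \leq G$ and an injection $\varphi: G \to \overline G := G/N$, I aim to locate a direct summand of $\overline G$ in which $\hat G := \varphi(G)$ is essential. The first reduction is to observe that every quotient of a D+E group is again D+E. Indeed, $(D+N)/N$ is divisible (being a quotient of $D$), hence splits off as a direct summand of $\overline G$; its complement is isomorphic to $G/(D+N) \cong E/(E\cap (D+N))$, a quotient of the elementary group $E$, hence itself elementary. So $\overline G$ is D+E. Moreover, $\varphi(D)$ is divisible, so $\overline G = \varphi(D) \oplus Q$ with $Q$ again D+E. Let $\pi: \overline G \to Q$ denote the projection. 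Since $D \cap E = 0$ and $\varphi$ is injective, the restriction $\pi|_{\varphi(E)}$ is injective, so $F := \pi(\varphi(E))$ is an elementary subgroup of $Q$. A short check gives the internal decomposition $\hat G = \varphi(D) \oplus F$ inside $\overline G = \varphi(D) \oplus Q$: any $f = \pi(\varphi(e)) \in F$ satisfies $\varphi(e) - f \in \varphi(D) \subseteq \hat G$, forcing $f \in \hat G$, while the reverse containment is trivial. It therefore suffices to find a direct summand $S'$ of $Q$ in which $F$ is essential; for then $\varphi(D) \oplus S'$ will be a summand of $\overline G$ containing $\hat G$ as an essential subgroup.

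To produce such an $S'$, note that $F$ is torsion, so it lies in the torsion subgroup of $Q$. Writing the $p$-primary torsion summand of $Q$ as $V_p$ (itself a $p$-torsion D+E group, being the sum of a divisible $p$-group and an elementary $p$-group), we have $F = \bigoplus_p F_p$ with $F_p \subseteq V_p[p]$. Taking direct sums, it suffices to produce for each prime $p$ a direct summand $S_p$ of $V_p$ in which $F_p$ is essential. For this I invoke the structural fact already used in the proof of Theorem~\ref{major}: any subsocle of a direct sum of a divisible and an elementary group is supported by a direct summand of the containing group. Applied to $V_p$ with subsocle $F_p$, this yields a summand $S_p$ of $V_p$ with $S_p[p] = F_p$. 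Since $S_p$ is a $p$-group, its socle is essential in it, so $F_p$ is essential in $S_p$. Then $S' := \bigoplus_p S_p$ is the summand of $Q$ we were looking for.

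The main obstacle is the identification $\hat G = \varphi(D) \oplus F$: the subgroup $\varphi(E)$ is not in general aligned with the splitting $\overline G = \varphi(D) \oplus Q$, so one must exploit the containment $\varphi(D) \subseteq \hat G$ to replace $\varphi(E)$ by its projection $F \subseteq Q$ and absorb the discrepancy into $\varphi(D)$. Once this alignment is in place, the prime-by-prime decomposition and the invocation of the subsocle lemma do the remaining work with no essential difficulty; in particular, no additional machinery beyond what already appears in the proof of Theorem~\ref{major} is required.
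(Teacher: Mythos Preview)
Your proof is correct and follows essentially the same strategy as the paper's own argument. The paper's proof is a two-line sketch: it asserts that homomorphic images of D+E groups are again D+E, and that any D+E subgroup of a D+E group is essential in a direct summand; you have simply unpacked both of these claims in full, splitting off $\varphi(D)$, projecting $\varphi(E)$ to the complement, and then invoking the subsocle fact prime by prime exactly as in the proof of Theorem~\ref{major}.
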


\begin{proof} It is easily seen that a homomorphic image of a D+E-group is also D+E, and that any D+E-subgroup of a D+E-group will be essential in a summand. This will imply the pursued result.
\end{proof}

So, if $G$ is a torsion-splitting group, i.e., $T$ is a summand of $G$, then the converse of Proposition~\ref{4} holds. However, it is not clear that this converse holds when $G$ is not torsion-splitting. In fact, the problem may be quite difficult and we leave it unsettled at this stage. 

Thus, we explicitly pose the following:

\begin{problem} Does it follow that a group $G$ is \sgcb, provided its torsion part $T$ is a direct sum of a divisible group and an elementary group and the quotient $G/T$ is a divisible group?
\end{problem}

\medskip
\medskip

\noindent {\bf Funding:} The work of the first-named author A.R. Chekhlov was supported by the Ministry of Science and Higher Education of Russia (agreement No. 075-02-2023-943). The work of the second-named author P.V. Danchev was partially supported by the Bulgarian National Science Fund under Grant KP-06 No. 32/1 of December 07, 2019, as well as by the Junta de Andaluc\'ia under Grant FQM 264, and by the BIDEB 2221 of T\"UB\'ITAK.

\vskip3.0pc

\end{document}